
\documentclass[12pt]{article}
\usepackage{amsfonts}
\usepackage{graphicx}
\usepackage{amsmath}
\usepackage[onehalfspacing]{setspace}

\setcounter{MaxMatrixCols}{10}

\doublespacing
\oddsidemargin=0in
\evensidemargin=0in
\topmargin=-40pt
\textwidth=6.7in
\textheight=8.7in
\numberwithin{equation}{section}

\newtheorem{theorem}{Theorem}[section]

\newtheorem{lemma}[theorem]{Lemma}

\newenvironment{proof}[1][Proof]{\textbf{#1.} }{\ \rule{0.5em}{0.5em}}

\begin{document}

\title{A Note on Indefinite Stochastic Riccati Equations}
\date{}
\author{\emph{{\textsc{\ Zhongmin Qian\thanks{%
Mathematical Institute, University of Oxford, 24-29 St Giles', Oxford OX1
3LB, UK.} and Xun Yu Zhou\thanks{%
Mathematical Institute, University of Oxford, 24-29 St Giles', Oxford OX1
3LB, UK, and Department of Systems Engineering \& Engineering Management,
The Chinese University of Hong Kong, Shatin, Hong Kong. X.~Y. Zhou
acknowledges the support from a start-up fund of Oxford, as well as from the
Nomura Centre and OMI.} }}}}
\maketitle


\leftskip1truecm \rightskip1truecm \noindent {\textbf{Abstract.}} An
indefinite stochastic Riccati Equation is a matrix-valued, highly nonlinear
backward stochastic differential equation together with an algebraic, matrix
positive definiteness constraint. We introduce a new approach to solve a
class of such equations (including the existence of solutions) driven by
one-dimensional Brownian motion. The idea is to replace the original
equation by a system of BSDEs (without involving any algebraic constraint)
whose existence of solutions automatically enforces the original algebraic
constraint to be satisfied.

\leftskip0truecm \rightskip0truecm

\vskip0.5truecm

\noindent \textit{Key words.} Stochastic Riccati equation, indefinite
matrix, backward stochastic differential equation, stochastic differential
equation

\vskip0.5truecm

\noindent \textit{AMS Classification.} 60H10, 60H30, 60J45

\newpage

\section{Introduction}

Stochastic matrix Riccati equations were first introduced by Bismut \cite%
{MR0406663} in his study of some stochastic control problems. A very special
class of these equations is the so-called quadratic backward stochastic
differential equation (BSDE). The existence and uniqueness of solutions for
such BSDEs remain a largely open problem, particularly for BSDE systems; but
see \cite{MR1782267}, \cite{MR1944142}, \cite{MR2391164} and the references
therein for recent progress. The \textit{indefnite} stochastic Riccati
equations (SRE) were first formulated in \cite{MR1626817}, motivated by the
introduction of the indefinite stochastic linear--quadratic control
problems. Such an equation is typically matrix-valued, highly nonlinear (not
even quadratic), and involves a positive semidefinite constraint in addition
to the backward equation. In \cite{MR1982738}, the uniqueness of solutions
to the SRE was established in the greatest generality based on a control
argument, but the existence was solved only for several very special cases.
The general existence remains to this date a significant open problem.

The SRE is a BSDE over a running time interval $[0,T]$: 
\begin{eqnarray}
dP &=&\sum_{j=1}^{k}\Lambda _{j}dW^{j}-\left[ PA+A^{\prime
}P+\sum_{j=1}^{k}\left( \Lambda _{j}C_{j}+C_{j}^{\prime }\Lambda
_{j}+C_{j}^{\prime }PC_{j}\right) +Q\right] dt  \notag \\
&&+\left[ PB+\sum_{j=1}^{k}\left( C_{j}^{\prime }P+\Lambda _{j}\right) D_{j}%
\right] K^{-1}\left[ B^{\prime }P+\sum_{j=1}^{k}D_{j}^{\prime }\left(
PC_{j}+\Lambda _{j}\right) \right] dt  \label{p1-eq1}
\end{eqnarray}%
subject to the constraint that 
\begin{equation}
K=R+\sum_{j=1}^{k}D_{j}^{\prime }PD_{j}>0  \label{p2-eq2}
\end{equation}%
in the matrix sense, and subject to the terminal condition that $P(T)=H$
which is $\mathcal{F}_{T}$-measurable. In this formulation, the time
parameter $t$ is omitted for simplicity, the capital letters $%
A,B,C,D,\Lambda $, $Q$ and $P $ are real matrix valued (adapted) processes
and $D^{\prime }$ means the transpose of $D$ etc. All the matrix processes
are square with the same dimension $n$, $P$ and $\Lambda _{j}$, $%
j=1,2,\cdots,k$, are unknowns and all the other parameters are given, and $%
W=(W^{1},\cdots ,W^{k})$ is $k$-dimensional standard Brownian motion.

The given matrix $R$ in defining $K$ is called the gauge matrix, which is an
adapted process. The SRE (\ref{p1-eq1})--(\ref{p2-eq2}) is indefinite, if
the gauge matrix $R $ is allowed to be indefinite, i.e., $R$ can have zero
or negative eigenvalues.

The problem is to look for square integrable adapted processes $P$ and $%
(\Lambda _{j})$ satisfying the corresponding stochastic integral equations
as well as the constraint (\ref{p2-eq2}). Moreover, in view of the proved
uniqueness of solutions, a solution matrix $P$ must be symmetric as long as
the parameters $Q$, $R$ and $H$ are symmetric.

We believe that the existence for the general SRE (\ref{p1-eq1})--(\ref%
{p2-eq2}) in high dimensions will remain to be an open question for some
time. The main challenge, apart from the highly nonlinear nature of the BSDE
(\ref{p1-eq1}) and the fact that the equation is matrix-valued, stems from
the presence of an additional algebraic constraint (\ref{p2-eq2}). In this
note, we develop a new approach to solve a class of SREs driven by
one-dimensional Brownian motion. The main idea is to consider a system of
BSDEs satisfied by $(K,K^{-1})$ without any algebraic constraint. It turns
out that the existence of solutions to the equation satisfied by $K^{-1}$
can be established independently, which in turn will ensure the validity of
the original constraint $K>0$.

\section{The main result}

In this paper we consider the SRE driven by a one-dimensional Brownian
motion, with the matrix process $D$ being invertible.

Therefore, in the remainder of this note, $W$ is a \emph{one-dimensional}
standard Brownian motion on a complete probability space $(\Omega ,\mathcal{F%
},\mathbb{P})$, and $(\mathcal{F}_{t})_{t\geq 0}$ is the Brownian filtration
generated by $W$. Without lose of generality, we may assume that $D=I$, and
we study the following BSDE 
\begin{equation}  \label{bs-01}
\begin{array}{rl}
dP = & \Lambda dW-\left[ PA+A^{\prime }P+C^{\prime }PC+\Lambda C+C^{\prime
}\Lambda +Q\right] dt, \\ 
& \;\;+\left[ PB+C^{\prime }P+\Lambda \right] K^{-1}\left[ B^{\prime
}P+PC+\Lambda \right] dt, \;\;t\in[0,T], \\ 
P(T)= & H,%
\end{array}%
\end{equation}%
where $T>0$ is fixed throughout the paper and the random symmetric matrix $H$
is bounded and $\mathcal{F}_{T}$-measurable, subject to the constraint that 
\begin{equation}
K\equiv R+P>0.  \label{co-01}
\end{equation}

For simplicity we assume that the coefficients $A,B,C,Q$ are bounded, square 
$n\times n$ matrix valued adapted processes, and in addition $Q$ is
symmetric.

We are interested in the indefinite gauge case, that is, $R$ may have zero
or negative eigenvalues, not necessarily being positive definite. The most
interesting case in applications is when $R$ is a matrix-valued It\^{o}
process; so we assume that it has the representation 
\begin{equation}
R(t)=R(0)+\int_{0}^{t}Fds+\int_{0}^{t}GdW  \label{r-a1}
\end{equation}%
where $F,G$ are bounded, adapted and measurable symmetric matrix-valued
processes.

By a solution $(P,\Lambda)$ we mean a pair of $(\mathcal{F}_{t})$-adapted,
measurable, square integrable and matrix-valued processes $P=(P(t))_{t\in
\lbrack 0,T]}$ and $\Lambda =(\Lambda (t))_{t\in \lbrack 0,T]}$ such that $%
K(t)=R(t)+P(t)>0$ in the matrix sense for all $t\in \lbrack 0,T]$, a.s., and%
\begin{eqnarray}
P(t) &=&H-\int_{t}^{T}\Lambda dW  \notag \\
&&+\int_{t}^{T}\left[ PA+A^{\prime }P+C^{\prime }PC+\Lambda C+C^{\prime
}\Lambda +Q\right] ds  \notag \\
&&-\int_{t}^{T}\left[ PB+C^{\prime }P+\Lambda \right] K^{-1}\left[ B^{\prime
}P+PC+\Lambda \right] ds  \label{i-bsde-1}
\end{eqnarray}%
for $t\in \lbrack 0,T]$, a.s., where the stochastic integral is understood
in the It\^{o} sense. A solution $(P,\Lambda )$ is called bounded if $P$ is
bounded. Clearly, if $(P,\Lambda )$ is a solution, then $P$ must be a
continuous matrix-valued semimartingale.

Due to the presence of $K^{-1}$ in (\ref{bs-01}), it is natural to rewrite (%
\ref{bs-01}) in terms of $K=R+P$ and $\tilde{\Lambda}=\Lambda +G$. This can
be achieved by making substitutions in (\ref{bs-01}): $P$ by $K-R$ and $%
\Lambda $ by $\tilde{\Lambda}-G$, leading to the following BSDE 
\begin{equation}  \label{bs-02}
\begin{array}{rl}
dK= & \tilde{\Lambda}dW-\left[ K\tilde{A}+\tilde{A}^{\prime }K+\tilde{Q}%
\right] dt+\left[ KB+\tilde{\Lambda}-\tilde{R}\right] K^{-1}\left[ B^{\prime
}K+\tilde{\Lambda}-\tilde{R}^{\prime }\right] dt, \\ 
K(T)= & R(T)+H,%
\end{array}%
\end{equation}%
where%
\begin{equation}
\tilde{Q}=Q+F+C^{\prime }RC+R\left( BC-A\right) +\left( C^{\prime }B^{\prime
}-A^{\prime }\right) R  \label{q-02}
\end{equation}%
and%
\begin{equation}
\tilde{A}=A-BC\text{, \ }\ \tilde{R}=RB+C^{\prime }R+G\text{.}  \label{a-01}
\end{equation}

We are now in a position to state our main result.

\begin{theorem}
\label{main-th}\bigskip Assume

\begin{itemize}
\item[\textrm{(i)}] 
\begin{equation}
\tilde{R}=RB+C^{\prime }R+G=0  \label{c-a}
\end{equation}%
and%
\begin{equation}
\tilde{Q}=Q+F+C^{\prime }RC+R\left( BC-A\right) +\left( C^{\prime }B^{\prime
}-A^{\prime }\right) R\geq 0\text{,}  \label{c-b}
\end{equation}

\item[\textrm{(ii)}] $G$ and $F$ are bounded adapted measurable processes
such that $R(T)+H>0$, and there is a constant $\delta >0$ such that $\left(
R(T)+H\right) ^{-1}\geq \delta I$.
\end{itemize}

Then there is a unique solution $(P,\Lambda )$ to the SRE (\ref{bs-01})--(%
\ref{co-01}). Moreover, $P+R$ is bounded.
\end{theorem}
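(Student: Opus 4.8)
The plan is to work with the reformulated equation (\ref{bs-02}) rather than (\ref{bs-01}) directly, and to exploit hypothesis (i) which makes the quadratic term collapse. Under the assumption $\tilde R = 0$, the BSDE (\ref{bs-02}) becomes
$$ dK = \tilde\Lambda\, dW - \bigl[K\tilde A + \tilde A'K + \tilde Q\bigr]dt + \bigl[KB+\tilde\Lambda\bigr]K^{-1}\bigl[B'K+\tilde\Lambda\bigr]dt, \qquad K(T)=R(T)+H, $$
with $\tilde Q\ge 0$ and $R(T)+H$ uniformly positive. The key idea, announced in the introduction, is to pass to $N := K^{-1}$. Applying Itô's formula to $K^{-1}$ (treating $K$ as a continuous semimartingale with the dynamics above, and using $d(K^{-1}) = -K^{-1}(dK)K^{-1} + K^{-1}(dK)K^{-1}(dK)K^{-1}$), the nonlinear $K^{-1}$-terms should cancel against the second-order correction term, leaving a BSDE for $N$ that is \emph{linear} (or at worst has bounded, Lipschitz coefficients) in $(N, \text{its martingale part})$. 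I would first carry out this computation carefully: set $\Gamma$ for the martingale integrand of $N$, substitute, and check that the quadratic-in-$K^{-1}$ contributions telescope, producing an equation of the form $dN = \Gamma\,dW - [\,\cdots N \cdots + N\tilde Q N\,\cdots]dt$ whose structure I can control; in fact the natural outcome is that $N$ solves a \emph{linear} matrix BSDE driven by bounded coefficients with a nonpositive (hence well-signed) zeroth-order term, because $-N\tilde Q N \le 0$.

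Second, I would establish existence, uniqueness, and a priori bounds for this $N$-equation. Since after the reduction the $N$-BSDE has bounded coefficients and is linear (or Lipschitz) in the unknowns, standard BSDE theory (Pardoux--Peng type results for matrix-valued linear BSDEs) gives a unique square-integrable solution $(N,\Gamma)$; boundedness of $N$ from above follows from boundedness of the terminal datum $(R(T)+H)^{-1}$ and of the coefficients via a comparison/Gronwall argument, and crucially the lower bound $N \ge \delta_0 I$ for some $\delta_0>0$ should follow from hypothesis (ii) together with the good sign of the $-N\tilde Q N$ term (which only helps push $N$ down, but a Gronwall estimate from below using $\tilde Q$ bounded keeps $N$ uniformly positive on $[0,T]$). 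This lower bound on $N$ is exactly the statement that $K = N^{-1}$ is well-defined and bounded.

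Third, I would \emph{invert the reduction}: define $K := N^{-1}$, apply Itô's formula again to recover that $K$ is a continuous semimartingale solving (\ref{bs-02}) with $K>0$ automatically (since $N>0$), identify $\tilde\Lambda$ from $\Gamma$, then set $P := K - R$ and $\Lambda := \tilde\Lambda - G$. By construction $K = R+P>0$, so the algebraic constraint (\ref{co-01}) holds, and unwinding the substitution shows $(P,\Lambda)$ solves (\ref{bs-01}); boundedness of $P+R = K = N^{-1}$ follows from $N\ge\delta_0 I$. Uniqueness of $(P,\Lambda)$ can be quoted from the general uniqueness result for the SRE mentioned in the introduction (\cite{MR1982738}), or re-derived by running the same $K\mapsto K^{-1}$ transformation on any two solutions and invoking uniqueness for the linear $N$-equation.

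The main obstacle I anticipate is the first step — verifying that the Itô transform of $K^{-1}$ genuinely linearizes the equation. The cancellation of the $[KB+\tilde\Lambda]K^{-1}[B'K+\tilde\Lambda]$ term against the quadratic variation correction is a matrix identity that must be checked with care about non-commutativity and transposes, and it is precisely here that the hypothesis $\tilde R=0$ is used (a nonzero $\tilde R$ would leave a residual term $N\tilde R + \tilde R'N$ plus $N\tilde R\tilde R'N$ that destroys linearity). A secondary technical point is justifying the application of Itô's formula to $K^{-1}$ before one knows $K$ is bounded away from zero: this is handled by the usual localization/stopping-time argument, deriving the a priori lower bound on the localized problem first and then removing the localization.
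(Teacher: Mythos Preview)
Your overall architecture matches the paper's: rewrite in terms of $K=R+P$, pass to $X:=K^{-1}$, solve the inverse equation independently, then invert back and read off $(P,\Lambda)$. You also correctly anticipate that the $\tilde\Lambda K^{-1}\tilde\Lambda$--type terms cancel against the It\^o correction, and that this is exactly where $\tilde R=0$ is used.

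The gap is at the point where you claim the $N$-equation is ``linear (or at worst Lipschitz)'' and invoke Pardoux--Peng. After the cancellation the inverse equation still carries a genuinely quadratic term in $X$; in the paper's notation it reads
\[
dX = Z\,dW + \bigl[\tilde A X + X\tilde A' - BXB' + BZ + ZB' + X\tilde Q X\bigr]\,dt,\qquad X(T)=(R(T)+H)^{-1},
\]
with $Z=-X\tilde\Lambda X$. The driver is not Lipschitz in $X$ because of $X\tilde Q X$ (and the sign is $+X\tilde Q X$, not $-N\tilde Q N$), so standard BSDE existence does not apply. In fact this equation is itself a Riccati-type BSDE, and its solvability is the real analytic content of the proof.

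The paper handles this via a monotone iteration: linearize the quadratic term around the previous iterate,
\[
X^{(n+1)}\tilde Q X^{(n)} + X^{(n)}\tilde Q X^{(n+1)} - X^{(n)}\tilde Q X^{(n)},
\]
and solve the resulting \emph{linear} BSDE for $X^{(n+1)}$. A representation formula of the form
\[
p'X(t)p=\mathbb E\Bigl[\xi_T'\,\eta\,\xi_T+\int_t^T \xi_s'\hat Q\xi_s\,ds\;\Big|\;\mathcal F_t\Bigr],
\]
obtained by running an auxiliary linear SDE for $\xi$, shows each $X^{(n)}\ge 0$ and that $X^{(n)}-X^{(n+1)}\ge 0$ (because the inhomogeneity is $-(X^{(n)}-X^{(n-1)})\tilde Q(X^{(n)}-X^{(n-1)})\le 0$). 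Monotone convergence then produces the solution $X$. The strict lower bound $X(t)\ge \delta e^{-\beta(T-t)}I$ (hence boundedness of $K=X^{-1}$) comes from the same representation plus a Gronwall estimate on $\mathbb E[|\xi_s|^2\mid\mathcal F_t]$, using $(R(T)+H)^{-1}\ge\delta I$; this is where hypothesis (ii) enters. So your proposal needs to replace the appeal to ``standard BSDE theory'' for the $N$-equation by an argument of this kind that copes with the quadratic growth.
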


The uniqueness has been established in \cite{MR1982738}, Theorem 3.2. The
existence, on the other hand, is known in the so-called definite case,
namely, when $R>0,\;Q\geq 0,\; H\geq 0$; see \cite{MR1149069} (where there
is an additional assumption that $D=0$). The existence when all these
matrices are allowed to be indefinite is investigated in \cite{MR1982738}
for several special cases. It should be noted that the existence of (\ref%
{bs-01}) is by no means unconditional; the problem is to find \textit{%
sufficient conditions} under which the existence holds. One of the
conditions, (\ref{c-a}), of Theorem \ref{main-th} stipulates that $R$
satisfies the following It\^{o} equation%
\begin{equation}  \label{keycondition}
dR=-(RB+C^{\prime }R)dW+Fdt
\end{equation}%
where $F$ can be arbitrary (up to the required Lebesgue integrability). 
The other condition, (\ref{c-b}), requires an ``overall" positive
semidefiniteness in place of that of individual matrices.

We will make further comments on the conditions of the preceeding theorem in
Section \ref{example}.

\section{A linear equation}

We need a result about the representation for a linear matrix-valued BSDE.
Consider 
\begin{eqnarray}
dY &=&\sum_{j=1}^{k}U_{j}dW^{j}-\left[ Y\hat{A}+\hat{A}^{\prime
}Y+\sum_{i=1}^{m}\hat{E}_{i}^{\prime }Y\hat{E}_{i}+\sum_{j=1}^{k}\left( U_{j}%
\hat{C}_{j}+\hat{C}_{j}^{\prime }U_{j}+\hat{C}_{j}^{\prime }Y\hat{C}%
_{j}\right) +\hat{Q}\right] dt\text{,}  \notag \\
Y_{T} &=&\hat{H}\text{ ,}  \label{lin-p1}
\end{eqnarray}%
where $\hat{A}$, $\hat{C}$, $\hat{E}$ and $\hat{Q}$ are $n\times n$ matrix
valued, adapted and bounded, $\hat{H}$ is bounded and $\mathcal{F}_{T}$%
-measurable, and $W$ is a standard Brownian motion of dimension $k$. The
BSDE is linear so there is a unique solution. Choose a standard Brownian
motion $\hat{W}$ of dimension $m$, which is independent of $W$. For any $p
\in \mathbb{R}^{n}$ and $0\leq t<T$, let $\xi $ be the solution to the
linear stochastic differential equation%
\begin{equation}
d\xi =\hat{A}\xi ds+\sum_{j=1}^{k}\hat{C}_{j}\xi dW^{j}+\sum_{i=1}^{m}\hat{E}%
_{i}\xi d\hat{W}^{i}\text{, }\xi _{t}=p \text{.}  \label{y-li1}
\end{equation}

\begin{lemma}
\label{lem-p1}Under the above assumptions and notations, we have%
\begin{equation}
p ^{\prime }Y(t)p =\mathbb{E}\left[ \left. \xi _{T}^{\prime }\hat{H}\xi
_{T}+\int_{t}^{T}\xi _{s}^{\prime }\hat{Q}\xi _{s}ds\right\vert \mathcal{F}%
_{t}\right] \text{.}  \label{pos-e1}
\end{equation}
\end{lemma}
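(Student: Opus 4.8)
The plan is to verify the identity \eqref{pos-e1} by applying It\^o's formula to the process $s \mapsto \xi_s' Y(s) \xi_s$ on $[t,T]$ and showing that the $ds$-terms telescope to leave exactly $-\xi_s' \hat{Q} \xi_s\, ds$, after which taking conditional expectations (and checking that the stochastic integrals are genuine martingales) yields the result. Concretely, I would first write down the three differentials: $dY$ from \eqref{lin-p1}, $d\xi$ from \eqref{y-li1}, and — since $\xi$ is vector-valued — I would treat $d(\xi\xi')$ as a matrix It\^o process. The cleanest bookkeeping is to compute $d(\xi_s'Y(s)\xi_s)$ directly using the product rule for three factors, collecting (a) the term $\xi'(dY)\xi$, (b) the terms $(d\xi)'Y\xi + \xi'Y(d\xi)$, (c) the quadratic covariations $(d\xi)'Y(d\xi)$, $\xi'(dY)(d\xi) + (d\xi)'(dY)\xi$, and (d) recalling that $W$ and $\hat W$ are independent so only like-indexed Brownian increments contribute.

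The key step is the cancellation. Writing out (a): the drift of $\xi'(dY)\xi$ contributes $-\xi'[Y\hat A + \hat A'Y + \sum_i \hat E_i'Y\hat E_i + \sum_j(U_j\hat C_j + \hat C_j'U_j + \hat C_j'Y\hat C_j) + \hat Q]\xi\,ds$ plus the martingale part $\sum_j \xi'U_j\xi\,dW^j$. From (b), using $d\xi = \hat A\xi\,ds + \sum_j \hat C_j\xi\,dW^j + \sum_i \hat E_i\xi\,d\hat W^i$, the drift is $\xi'(\hat A'Y + Y\hat A)\xi\,ds$ and there are martingale parts. From (c): $(d\xi)'Y(d\xi)$ has drift $\sum_j \xi'\hat C_j'Y\hat C_j\xi\,ds + \sum_i \xi'\hat E_i'Y\hat E_i\xi\,ds$ (the cross $W$–$\hat W$ terms vanish by independence), and $\xi'(dY)(d\xi) + (d\xi)'(dY)\xi$ has drift $\sum_j \xi'(U_j'\hat C_j + \hat C_j'U_j)\xi\,ds$ — here one must be careful that the $dY$ martingale is $\sum_j U_j\,dW^j$ so only the $W^j$-components of $d\xi$ pair with it, and note $U_j$ need not be symmetric so both orders appear. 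Summing (a)+(b)+(c), every term except $-\xi'\hat Q\xi\,ds$ cancels, giving $d(\xi_s'Y(s)\xi_s) = -\xi_s'\hat Q\xi_s\,ds + dM_s$ for a local martingale $M$.

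The remaining step is to integrate from $t$ to $T$, use $Y_T = \hat H$ and $\xi_T$ as given, and take $\mathbb{E}[\,\cdot\mid\mathcal F_t]$; this gives $\xi_t'Y(t)\xi_t = \mathbb{E}[\xi_T'\hat H\xi_T + \int_t^T \xi_s'\hat Q\xi_s\,ds \mid \mathcal F_t]$, and since $\xi_t = p$ is deterministic (given $\mathcal F_t$) this is \eqref{pos-e1}. I expect the main obstacle to be the integrability/true-martingale justification rather than the algebra: one needs $M$ to be a true martingale (not merely local) so that its conditional expectation vanishes. This follows from standard estimates — $\hat A, \hat C, \hat E, \hat Q, \hat H$ are bounded, so $\mathbb{E}\sup_{s\le T}|\xi_s|^p < \infty$ for all $p$ by Gr\"onwall/BDG applied to the linear SDE \eqref{y-li1}, and $Y$ is square integrable (indeed, being the solution of a linear BSDE with bounded data, it has moments of all orders), so Cauchy–Schwarz bounds the quadratic variation of $M$ in $L^1$; alternatively one localizes by a sequence of stopping times and passes to the limit by dominated convergence. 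A secondary point of care is simply the matrix transposition bookkeeping in the covariation terms, since none of $Y, U_j$ is assumed symmetric.
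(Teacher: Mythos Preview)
Your proposal is correct and follows exactly the paper's approach: apply It\^o's formula to $J(s)=\xi_s'Y(s)\xi_s$, observe that all drift terms cancel except $-\xi_s'\hat Q\xi_s\,ds$, integrate over $[t,T]$, and take conditional expectation. Your write-up is in fact more careful than the paper's, which simply states the resulting differential and concludes; the one slip is a transpose in the cross-variation term (you should get $\xi'(U_j\hat C_j+\hat C_j'U_j)\xi\,ds$, matching what appears in the drift of $dY$), but the cancellation goes through as you describe.
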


\begin{proof}
Applying It\^{o}'s formula to 
\begin{equation}
J(s)=\xi _{s}^{\prime }Y(s)\xi _{s}=(Y(s)\xi _{s},\xi _{s})  \label{j-li1}
\end{equation}%
we obtain%
\begin{equation}
dJ=-\xi ^{\prime }\hat{Q}\xi ds+\xi ^{\prime }\sum_{j=1}^{k}\left( U_{j}+%
\hat{C}_{j}^{\prime }Y+Y\hat{C}_{j}\right) dW^{j}\xi +\xi ^{\prime
}\sum_{i=1}^{m}\left( Y\hat{E}_{i}+\hat{E}_{i}^{\prime }Y\right) d\hat{W}%
^{i}\xi.  \label{j-eq1}
\end{equation}%
Integrating from $t$ to $T$, and conditional on $\mathcal{F}_{t}$ we obtain (%
\ref{pos-e1}).
\end{proof}

\begin{lemma}
\label{lem-p1a}If in addition $\hat{H}\geq \delta I$ for some constant $%
\delta >0$ and $\hat{Q}\geq 0$, then the solution $Y$ to (\ref{lin-p1})
satisfies 
\begin{equation}
Y(t)\geq \delta e^{-\beta (T-t)}I\text{ \ \ \ }\forall t\in \lbrack 0,T],\;\;%
\mbox{a.s.},  \label{bound-1}
\end{equation}%
where 
\begin{equation}
\beta =\text{ess}\sup_{\omega \in \Omega ,s\leq T}\{-2\inf_{|\xi |=1}\xi
^{\prime }\hat{A}(s,\omega )\xi ,0\}\text{.}  \label{er-01}
\end{equation}
\end{lemma}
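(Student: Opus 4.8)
The plan is to deduce the bound directly from the probabilistic representation of Lemma \ref{lem-p1}, reducing everything to a Gronwall-type estimate on the flow $\xi$. Fix $t\in[0,T)$ and a (deterministic) vector $p\in\mathbb{R}^{n}$, and let $\xi=(\xi_{s})_{s\in[t,T]}$ solve the linear SDE (\ref{y-li1}) with $\xi_{t}=p$. Since $\hat{H}\geq\delta I$ and $\hat{Q}\geq 0$, formula (\ref{pos-e1}) gives at once
\[
p^{\prime}Y(t)p=\mathbb{E}\!\left[\left.\xi_{T}^{\prime}\hat{H}\xi_{T}+\int_{t}^{T}\xi_{s}^{\prime}\hat{Q}\xi_{s}\,ds\,\right|\mathcal{F}_{t}\right]\geq\delta\,\mathbb{E}\!\left[\left.|\xi_{T}|^{2}\,\right|\mathcal{F}_{t}\right].
\]
Thus it suffices to prove the lower bound $\mathbb{E}[\,|\xi_{T}|^{2}\mid\mathcal{F}_{t}\,]\geq e^{-\beta(T-t)}|p|^{2}$.

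To get this, I would apply It\^{o}'s formula to $|\xi_{s}|^{2}=\xi_{s}^{\prime}\xi_{s}$. Using (\ref{y-li1}) and the mutual independence of $W$ and $\hat{W}$ one finds
\[
d|\xi_{s}|^{2}=\Big[\,2\xi_{s}^{\prime}\hat{A}_{s}\xi_{s}+\sum_{j=1}^{k}|\hat{C}_{j}\xi_{s}|^{2}+\sum_{i=1}^{m}|\hat{E}_{i}\xi_{s}|^{2}\,\Big]\,ds+dM_{s},
\]
where $M$ is a local martingale; because $\hat{A},\hat{C},\hat{E}$ are bounded, $\xi$ has finite moments of every order on $[t,T]$, so $M$ is in fact a genuine martingale. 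By the definition (\ref{er-01}) of $\beta$ we have $\xi_{s}^{\prime}\hat{A}_{s}\xi_{s}\geq-\tfrac{\beta}{2}|\xi_{s}|^{2}$ for a.e. $(s,\omega)$, and the two sums are nonnegative, so the process $Z_{s}:=e^{\beta s}|\xi_{s}|^{2}$ has nonnegative drift:
\[
dZ_{s}=e^{\beta s}\Big[\beta|\xi_{s}|^{2}+2\xi_{s}^{\prime}\hat{A}_{s}\xi_{s}+\sum_{j=1}^{k}|\hat{C}_{j}\xi_{s}|^{2}+\sum_{i=1}^{m}|\hat{E}_{i}\xi_{s}|^{2}\Big]\,ds+e^{\beta s}\,dM_{s}\geq e^{\beta s}\,dM_{s}.
\]
Hence $Z$ is a submartingale on $[t,T]$, so $\mathbb{E}[Z_{T}\mid\mathcal{F}_{t}]\geq Z_{t}$, i.e. $e^{\beta T}\mathbb{E}[\,|\xi_{T}|^{2}\mid\mathcal{F}_{t}\,]\geq e^{\beta t}|p|^{2}$, which is exactly the required bound. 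Combined with the display above, this yields $p^{\prime}Y(t)p\geq\delta e^{-\beta(T-t)}|p|^{2}$ for every $p\in\mathbb{R}^{n}$, that is, $Y(t)\geq\delta e^{-\beta(T-t)}I$.

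Finally, to upgrade ``for each fixed $t$, a.s.'' to ``a.s., for all $t\in[0,T]$'', I would use that $t\mapsto Y(t)$ is continuous (being a BSDE solution): run the estimate for each rational $t$ and each $p$ in a countable dense subset of $\mathbb{R}^{n}$, giving a single full-measure event, and then extend to all $t$ and all $p$ by continuity. As for obstacles, there really is no serious one here: the lemma is essentially a one-line consequence of Lemma \ref{lem-p1} plus the elementary submartingale estimate for $|\xi|^{2}$. The only points needing a little care are (a) justifying that $M$ (equivalently $\int e^{\beta s}\,dM_{s}$) is a true martingale, which follows from the boundedness of the coefficients and the resulting $L^{p}$ bounds on $\xi$; (b) the finiteness of $\beta$, clear since $\inf_{|\eta|=1}\eta^{\prime}\hat{A}(s,\omega)\eta$ is the least eigenvalue of the bounded adapted process $\tfrac12(\hat{A}+\hat{A}^{\prime})$; and (c) the routine continuity argument in (c) above.
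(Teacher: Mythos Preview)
Your proof is correct and follows essentially the same route as the paper: both apply It\^o's formula to $|\xi_{s}|^{2}$, use the definition of $\beta$ to bound the $\hat{A}$ term from below, drop the nonnegative $\hat{C}_{j}$ and $\hat{E}_{i}$ contributions, and then feed the resulting lower bound on $\mathbb{E}[\,|\xi_{T}|^{2}\mid\mathcal{F}_{t}\,]$ into the representation (\ref{pos-e1}). The only cosmetic difference is that the paper takes conditional expectations first and applies Gronwall to $s\mapsto\mathbb{E}[\,|\xi_{s}|^{2}\mid\mathcal{F}_{t}\,]$, whereas you package the same step as a submartingale argument for $e^{\beta s}|\xi_{s}|^{2}$; these are equivalent formulations of the same estimate.
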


\begin{proof}
Let $p\in \mathbb{R}^{n}$ and $\xi $ solve (\ref{y-li1}). Applying It\^{o}'s
formula to $|\xi |^{2}$ to obtain 
\begin{eqnarray*}
d|\xi |^{2} &=&2\xi ^{\prime }\hat{A}\xi ds+\sum_{j=1}^{k}|\hat{C}_{j}\xi
|^{2}ds+\sum_{i=1}^{m}|\hat{E}_{i}\xi |^{2}ds \\
&&+2\sum_{j=1}^{k}\xi ^{\prime }\hat{C}_{j}\xi dW^{j}+2\sum_{i=1}^{m}\xi
^{\prime }\hat{E}_{i}\xi d\hat{W}^{i}\text{.}
\end{eqnarray*}%
Integrating over $[t,T]$ and taking conditional expectation on $\mathcal{F}%
_{t}$ we obtain%
\begin{eqnarray*}
\mathbb{E}\left[ \left. |\xi _{s}|^{2}\right\vert \mathcal{F}_{t}\right]
&=&|p|^{2}+\int_{t}^{s}\mathbb{E}\left[ \left. \left( 2\xi ^{\prime }\hat{A}%
\xi +\sum_{j=1}^{k}|\hat{C}_{j}\xi |^{2}+\sum_{i=1}^{m}|\hat{E}_{i}\xi
|^{2}\right) dr\right\vert \mathcal{F}_{t}\right] \\
&\geq &|p|^{2}-\beta \int_{t}^{s}\mathbb{E}\left[ \left. |\xi
_{r}|^{2}\right\vert \mathcal{F}_{t}\right] dr\text{.}
\end{eqnarray*}%
The Gronwall inequality then yields 
\begin{equation*}
\mathbb{E}\left[ \left. |\xi _{T}|^{2}\right\vert \mathcal{F}_{t}\right]
\geq |p|^{2}e^{-\beta (T-t)}.
\end{equation*}%
Finally, it follows from Lemma \ref{lem-p1} that 
\begin{equation*}
p^{\prime }Y(t)p\geq \mathbb{E}\left[ \left. \xi _{T}^{\prime }\hat{H}\xi
_{T}\right\vert \mathcal{F}_{t}\right] \geq \delta |p|^{2}e^{-\beta (T-t)}
\end{equation*}%
which implies (\ref{bound-1}).
\end{proof}

\section{Proof of Theorem \protect\ref{main-th}}

The remainder of the paper is devoted to the proof of Theorem \ref{main-th}.

To handle the positive definiteness constraint (\ref{co-01}), we couple the
BSDE (\ref{bs-02}) together with another BSDE for $X=K^{-1}$, and consider
the resulting system of BSDEs \textit{without} the explicit constraint $K>0$%
. This last constraint will be \textit{implied} by the existence of
solutions to this system of BSDEs. %
%

Therefore we next derive the BSDE for $X=K^{-1}$ which can be obtained from
the identities $XK=KX=I$. In fact, by integrating by parts, 
\begin{equation}
dX=-X(dK)X-\langle dX,dK\rangle X\text{.}  \label{x-e1}
\end{equation}%
In particular the martingale part of $X$ is $-X\tilde{\Lambda}XdW$; so $%
\langle dX,dK\rangle =-X\tilde{\Lambda}X\tilde{\Lambda}dt$. Substituting
this equation into (\ref{x-e1}) to obtain%
\begin{equation}
dX=-X(dK)X+X\tilde{\Lambda}X\tilde{\Lambda}Xdt\text{ .}  \label{x-e11}
\end{equation}

Using (\ref{bs-02}) we obtain a BSDE that $X=K^{-1}$ must satisfy, that is%
\begin{eqnarray}
dX &=&-X\tilde{\Lambda}XdW+X\left[ K\tilde{A}+\tilde{A}^{\prime }K+\tilde{Q}%
\right] Xdt  \notag \\
&&-X\left[ KB-\tilde{R}+\tilde{\Lambda}\right] X\left[ B^{\prime }K-\tilde{R}%
^{\prime }+\tilde{\Lambda}\right] Xdt+X\tilde{\Lambda}X\tilde{\Lambda}Xdt%
\text{.}  \label{x-e2}
\end{eqnarray}

Setting $Z=-X\tilde{\Lambda}X$, and using the fact that $KX=XK=I$, we obtain
its equation 
\begin{equation}  \label{x-e3}
\begin{array}{rl}
dX = & ZdW+\left[ \tilde{A}X+X\tilde{A}^{\prime }-BXB^{\prime
}+BZ+ZB^{\prime }\right] dt-\left[ Z\tilde{R}^{\prime }X+X\tilde{R}Z\right]
dt \\ 
& \;\;\;+\left[ X\tilde{Q}X+BX\tilde{R}^{\prime }X+X\tilde{R}XB^{\prime }-X%
\tilde{R}X\tilde{R}^{\prime }X\right] dt, \\ 
X(T)= & (R(T)+H)^{-1}.%
\end{array}%
\end{equation}

Notice that all the terms involving $\tilde{\Lambda}$ have been canceled out
thanks to the assumption that the driving noise $W$ is one dimensional; so (%
\ref{x-e3}) no longer contains $\tilde{\Lambda}$. This reveals another
significant feature of the SRE, that is, equation (\ref{x-e3}) for the
inverse matrix $X=K^{-1}$ is itself closed, in the sense that it does not
depend on $K$ or $\tilde{\Lambda}$. Therefore we can solve (\ref{x-e3})
independently without the prior knowledge that $X$ is the inverse of $K$.
Let us call (\ref{x-e3}) \emph{the inverse equation} associated with the SRE
(\ref{bs-01})--(\ref{co-01}).

Therefore, if we are able to solve (\ref{x-e3}) to get $(X,Z)$ with $X>0$ on 
$[0,T]$, then $(P,\Lambda)$, where $P=X^{-1}-R$ and $\Lambda =-X^{-1}ZX-G$,
is a solution to (\ref{bs-01}). In particular, $R+P\equiv K\equiv X^{-1}>0$
is satisfied automatically.

Now we return to the BSDE (\ref{bs-02}) for $K$ and we wish to rewrite it in
terms of $(Z,X)$. There are several ways to do this because of the relations 
$XK=KX=I$, and we will choose one which will serve our propose in this
paper. In (\ref{bs-02}) replace $K^{-1}$ by $X$ and replace $X\tilde{\Lambda}
$ by $-ZK$ to obtain

\begin{equation}  \label{k-n1}
\begin{array}{rl}
dK = & \tilde{\Lambda}dW-\left[ K\tilde{A}+\tilde{A}^{\prime }K+\tilde{Q}%
\right] dt-\left[ KB+\tilde{\Lambda}-\tilde{R}\right] X\tilde{R}^{\prime }dt
\\ 
& \;\;\;+\left[ KB+\tilde{\Lambda}-\tilde{R}\right] \left( XB^{\prime
}-Z\right) Kdt, \\ 
K(T)= & R(T)+H.%
\end{array}%
\end{equation}

We consider (\ref{x-e3}) and (\ref{k-n1}) together as a single system of
BSDEs, and ignore the fact that $X$ is the inverse matrix of $K$ as well as
the constraint $K>0$. This system can be solved one by one: we can solve (%
\ref{x-e3}) first to obtain $(X,Z)$, and then solve (\ref{k-n1}) regarding $%
(X,Z)$ as known parameters. This is actually the approach we will follow.

Under our assumption that $\tilde{R}=0$, our basic BSDEs (\ref{x-e3}) and (%
\ref{k-n1}) are significantly simplified. In fact 
\begin{equation}
dX=ZdW+\left[ \tilde{A}X+X\tilde{A}^{\prime }-BXB^{\prime }+BZ+ZB^{\prime }%
\right] dt+X\tilde{Q}Xdt,\;X(T)=(R(T)+H)^{-1},  \label{d-1-c0}
\end{equation}%
and%
\begin{equation}
dK=\tilde{\Lambda}dW-\left[ K\tilde{A}+\tilde{A}^{\prime }K+\tilde{Q}\right]
dt+(KB+\tilde{\Lambda})\left( XB^{\prime }-Z\right) Kdt,\;K(T)=R(T)+H\text{.}
\label{d-1-c0b}
\end{equation}


\begin{lemma}
\label{lem-a1}Suppose $((X,K),(Z,\tilde{\Lambda}))$ is a bounded solution to
(\ref{d-1-c0}) and (\ref{d-1-c0b}). Then $XK=KX=I$.
\end{lemma}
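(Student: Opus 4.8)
The plan is to show that the matrix-valued process $M := XK - I$ (and similarly $KX - I$) satisfies a linear matrix-valued SDE with zero terminal condition, and then invoke a uniqueness argument to conclude $M \equiv 0$. First I would compute $d(XK)$ by It\^{o}'s product rule, using the SDEs (\ref{d-1-c0}) for $X$ with martingale term $Z\,dW$ and (\ref{d-1-c0b}) for $K$ with martingale term $\tilde\Lambda\,dW$, so that $d(XK) = (dX)K + X(dK) + \langle dX, dK\rangle$ with $\langle dX, dK\rangle = Z\tilde\Lambda\,dt$. The key algebraic step is to reorganize every term on the right-hand side so that it is expressed as a (matrix) linear combination of $M = XK - I$ and possibly its transpose, with bounded adapted coefficients; terms that would otherwise be nonlinear (e.g.\ those containing $X\tilde Q X K$, $XKB(\cdots)K$, etc.) should, after substituting $XK = M + I$ wherever an $XK$ block appears, collapse to $\tilde Q$-, $B$-, $\tilde A$-linear expressions in $M$ plus a remainder that vanishes identically. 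I expect the terminal value $X(T)K(T) - I = (R(T)+H)^{-1}(R(T)+H) - I = 0$ to drop out immediately from the terminal conditions.

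The main obstacle will be the bookkeeping in that algebraic cancellation: the equations (\ref{d-1-c0}) and (\ref{d-1-c0b}) were derived precisely under the \emph{assumption} that $X = K^{-1}$, so when one differentiates $XK$ without that assumption, the cross terms involving $B$, $Z$, $\tilde\Lambda$ and $\tilde Q$ must be shown to recombine into something of the form $(\text{bounded})\cdot M + M \cdot(\text{bounded})$. Concretely, I anticipate that after expansion the drift of $d(XK)$ will read $\bigl[\tilde A^{\prime} M + M \tilde A^{\prime} \text{-type terms}\bigr]\,dt + \bigl[\text{products with } (XB^{\prime}-Z)K \text{ and } X\tilde Q X K\bigr]\,dt$, and one rewrites $X\tilde Q X K = X\tilde Q(M+I)$, $(KB+\tilde\Lambda)(XB^{\prime}-Z)K$ via $X$ acting on it, etc., until a homogeneous linear equation in $M$ emerges; the martingale part will similarly be of the form $(ZK + X\tilde\Lambda)\,dW$, which one rewrites using $X\tilde\Lambda = -ZK + Z(XK) \cdot(\cdots)$ — more carefully, using the substitution $X\tilde\Lambda = -ZK$ that was used to derive (\ref{k-n1}) only held under $XK=I$, so here it contributes $Z M$-type terms as well. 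I would be careful to keep $M$ and $M^{\prime}$ separate, since $X$ and $K$ are not a priori symmetric in tandem.

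Once $M$ is shown to satisfy $dM = (\alpha M + M\beta + \gamma M^{\prime} + M^{\prime}\delta)\,dt + (\mu M + M\nu)\,dW$ with $M(T)=0$ and bounded adapted coefficients $\alpha,\ldots,\nu$, together with the analogous equation for $N := KX - I$, I would close the argument by a standard energy/Gr\"onwall estimate: apply It\^{o} to $|M(t)|^2 + |N(t)|^2$ (Frobenius norm), take expectations to kill the martingale term (here boundedness of the solution, which is part of the hypothesis, guarantees the local martingale is a true martingale), and obtain $\mathbb{E}|M(t)|^2 + \mathbb{E}|N(t)|^2 \le C\int_t^T (\mathbb{E}|M(s)|^2 + \mathbb{E}|N(s)|^2)\,ds$, whence $M \equiv N \equiv 0$ by Gr\"onwall. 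Since $M$ and $N$ are continuous, this holds for all $t\in[0,T]$ almost surely, giving $XK = KX = I$. An alternative to writing out the equation for $N$ separately: once $XK = I$ is known and $X$ is bounded (hence, being a solution, has a continuous version that is invertible with $X^{-1} = K$), $KX = I$ follows because a left inverse of an invertible matrix is the two-sided inverse — so in fact establishing the equation for $M$ alone suffices, with $X$'s invertibility coming for free once $XK=I$.
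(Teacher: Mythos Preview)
Your strategy is exactly the paper's: form the difference $KX-I$ (the paper uses this order rather than $XK-I$, which makes the algebra a touch cleaner because the factor $(XB'-Z)K$ in (\ref{d-1-c0b}) then meets $X$ on the right), compute its It\^o differential, observe that the terminal value vanishes, and conclude by uniqueness for a linear BSDE. The paper's entire argument is five lines: with $Y=KX-I$ and $U=KZ+\tilde\Lambda X$ one finds
\[
dY=U\,dW+UB'\,dt+Y(\tilde A'+\tilde Q X)\,dt+\bigl[\tilde\Lambda XB'-\tilde A'-(KB+\tilde\Lambda)Z\bigr]Y\,dt,\qquad Y(T)=0,
\]
and then invokes uniqueness.

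One correction to your write-up: the resulting equation is \emph{not} of the form $dM=(\text{linear in }M,M')\,dt+(\mu M+M\nu)\,dW$. The martingale part is $U\,dW$ with $U=ZK+X\tilde\Lambda$, and there is no way to rewrite $U$ as a linear expression in $M$ alone, since $Z$ and $\tilde\Lambda$ are independent data with no a~priori relation (the identity $X\tilde\Lambda=-ZK$ was used only to \emph{derive} (\ref{d-1-c0b}) and is precisely what you are trying to prove). The right viewpoint is that $(Y,U)$ solves a linear BSDE whose drift is linear in $Y$ \emph{and} $U$ with bounded coefficients and zero terminal datum; uniqueness for such BSDEs (or, equivalently, the standard energy estimate where the $|U|^2\,dt$ from the quadratic variation absorbs the $UB'$ cross-term via Young's inequality) gives $Y\equiv0$, $U\equiv0$. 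No transpose $M'$ enters, and no separate equation for $KX-I$ versus $XK-I$ is needed: once $KX=I$ for square matrices, $XK=I$ is automatic.
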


\begin{proof}
Let $Y=KX-I$. Then applying It\^o's formula to (\ref{d-1-c0}) and (\ref%
{d-1-c0b}) we obtain%
\begin{eqnarray*}
dY &=&K(dX)+(dK)X+\langle dK,dX\rangle \\
&=&UdW+UB^{\prime }dt+Y\tilde{A}^{\prime }dt-\tilde{A}^{\prime }Ydt+Y\tilde{Q%
}Xdt \\
&&+(\tilde{\Lambda}XB^{\prime }-KBZ-\tilde{\Lambda}Z)Ydt \\
&=&UdW+UB^{\prime }dt+Y\left( \tilde{A}^{\prime }+\tilde{Q}X\right) dt \\
&&+\left[ \tilde{\Lambda}XB^{\prime }-\tilde{A}^{\prime }-(KB+\tilde{\Lambda}%
)Z\right] Ydt,
\end{eqnarray*}%
where $U=KZ+\tilde{\Lambda}X$. This is a linear BSDE with the terminal value 
$Y(T)=K(T)X(T)-I=0$. The uniqueness of solution to the linear BSDE then
yields $Y=0$.
\end{proof}

BSDE (\ref{d-1-c0}) is matrix valued with a quadratic term in the drift. If $%
\tilde{Q}>0$, then it is a special case of a \textit{definite} SRE whose
solvability has been established by Bismut \cite{MR0406663} and Peng \cite%
{MR1149069}. In our case where $\tilde{Q}\geq 0$, we use an approximation
scheme, adapted from \cite{MR1149069}, to prove the existence of (\ref%
{d-1-c0}).

\begin{lemma}
\label{lem-p2} Let $\eta $ be a bounded, $n\times n$ symmetric matrix-valued 
$\mathcal{F}_{T}$-measurable random variable. Then there is a unique adapted
bounded solution $(X,Z)$ to the BSDE%
\begin{eqnarray}
dX &=&ZdW+\left( \tilde{A}X+X\tilde{A}^{\prime }-BXB^{\prime }+BZ+ZB^{\prime
}\right) dt+X\tilde{Q}Xdt\text{,}  \notag \\
X(T) &=&\eta \text{. }  \label{x-eq-r1}
\end{eqnarray}%
If in addition $\eta >0$, then $X(t)>0$ for all $t\in \lbrack 0,T]$.
\end{lemma}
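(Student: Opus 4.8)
The plan is to establish Lemma \ref{lem-p2} by a combination of a priori estimates, an approximation/iteration scheme, and a comparison argument for the positivity claim. First I would address uniqueness, which is the easy part: if $(X_1,Z_1)$ and $(X_2,Z_2)$ are two bounded solutions, then $\Delta X = X_1 - X_2$ satisfies a BSDE whose drift, after writing $X_1\tilde Q X_1 - X_2\tilde Q X_2 = X_1\tilde Q\,\Delta X + \Delta X\,\tilde Q X_2$, is linear in $(\Delta X,\Delta Z)$ with bounded (because the $X_i$ are bounded) coefficients, and has zero terminal value; standard linear BSDE theory then gives $\Delta X = 0$, $\Delta Z = 0$.

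For existence I would use the approximation scheme alluded to in the text, adapted from Peng \cite{MR1149069}. The idea is to truncate the quadratic nonlinearity: for each $m$, replace $X\tilde Q X$ by $\phi_m(X)\tilde Q\phi_m(X)$ where $\phi_m$ is a smooth bounded truncation of the identity on symmetric matrices with $\|\phi_m(X)\|\le m$, so that the driver becomes globally Lipschitz in $(X,Z)$ and the truncated BSDE has a unique adapted square-integrable solution $(X^m,Z^m)$ by the classical Pardoux--Peng theorem. The crucial step is then a uniform (in $m$) a priori bound on $\|X^m\|_\infty$. To get this I would apply It\^o's formula to $p'X^m(t)p$ for a fixed unit vector $p$ and seek to represent it, as in Lemma \ref{lem-p1}, via an auxiliary forward SDE; alternatively, and more robustly, I would exploit the structure by noting that (before truncation) the equation for $X$ is itself a \emph{definite} stochastic Riccati-type equation (the quadratic term $X\tilde Q X$ enters with a good sign since $\tilde Q\ge 0$), so that a one-sided comparison against the linear equation obtained by dropping $X\tilde Q X$ (or by freezing it) yields an upper bound $X^m(t)\le \Xi(t)$ where $\Xi$ solves a \emph{linear} matrix BSDE with bounded coefficients and terminal value $\eta$, hence is bounded by a constant depending only on $\|\eta\|_\infty$, $T$, and the bounds on $\tilde A,B,\tilde Q$. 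Once $\|X^m\|_\infty \le c$ uniformly with $c < m$, the truncation is inactive, so $(X^m,Z^m)$ actually solves the original equation \eqref{x-eq-r1}, giving existence of a bounded solution; a companion estimate on $\mathbb E\int_0^T\|Z^m\|^2\,ds$ (from It\^o on $\|X^m\|^2$ together with the $\|X^m\|_\infty$ bound) ensures the limit is a genuine solution if one prefers to pass to a limit rather than argue the truncation is inactive.

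For the positivity statement, assume $\eta > 0$. Here I would use the inverse trick that is the leitmotif of the paper in miniature, or else a direct comparison. The cleanest route: on the (stochastic) interval where $X$ stays positive, set $N = X^{-1}$ and derive, by It\^o's formula as in \eqref{x-e1}--\eqref{x-e2}, a BSDE for $N$; the point is that the $X\tilde Q X$ term becomes simply $\tilde Q$ in the $N$-equation, so $N$ satisfies a \emph{linear} matrix BSDE of the type covered by Lemma \ref{lem-p1} (with suitable $\hat A,\hat C,\hat E,\hat Q$ read off from the coefficients), whence Lemma \ref{lem-p1a} applies and yields $N(t)\ge \delta' e^{-\beta(T-t)} I > 0$ for an appropriate constant, i.e. $X(t) = N(t)^{-1}$ is bounded above and, more importantly, $X(t)$ cannot degenerate. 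To make this rigorous one introduces a stopping time $\tau = \inf\{t : X(t)\not> \varepsilon I\}$, runs the argument on $[\![0,\tau]\!]$ using a regularized inverse or the localized process, obtains a lower bound on $X$ independent of $\varepsilon$ on that interval, and concludes $\tau = T$ by letting $\varepsilon \downarrow 0$; boundedness of $X$ from the previous step keeps all coefficients in the $N$-equation under control.

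The main obstacle I expect is the uniform a priori bound $\|X^m\|_\infty \le c$ with $c$ independent of $m$: matrix-valued quadratic BSDEs do not in general admit bounded solutions, and one must use the specific sign structure ($\tilde Q\ge 0$ contributes a term of definite sign, exactly as in Peng's definite SRE) to close the estimate, rather than any generic Lipschitz/quadratic-growth machinery. A secondary technical nuisance is justifying the It\^o computation for $X^{-1}$ in the positivity argument, which requires the localization just described so that one never differentiates the inverse past the point where $X$ degenerates; but this is routine once the bound $\|X\|_\infty \le c$ is in hand and $\tilde Q\ge 0$ guarantees the $N$-equation has the favorable linear form.
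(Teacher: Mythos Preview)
Your existence route (truncate the quadratic, invoke Pardoux--Peng, then remove the truncation via an a priori bound) is a genuinely different strategy from the paper's. The paper instead runs a \emph{monotone} Picard iteration: linearize $X\tilde Q X$ as $X^{(n+1)}\tilde Q X^{(n)}+X^{(n)}\tilde Q X^{(n+1)}-X^{(n)}\tilde Q X^{(n)}$, solve the resulting linear BSDE for $X^{(n+1)}$, and use Lemma~\ref{lem-p1} to see that $X^{(n)}\ge 0$ and $X^{(n)}-X^{(n+1)}\ge 0$ (the key identity being $-(X^{(n)}-X^{(n-1)})\tilde Q(X^{(n)}-X^{(n-1)})\le 0$). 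The decreasing, nonnegative sequence then converges. Both approaches ultimately lean on the representation in Lemma~\ref{lem-p1} (your ``comparison against the linear equation dropping $X\tilde Q X$'' would have to be implemented exactly that way, since there is no ready-made matrix BSDE comparison theorem). The paper's iteration is a bit cleaner because monotonicity delivers the uniform bound automatically, whereas your scheme still needs a separate lower estimate on $X^m$ to control $\|X^m\|_\infty$.

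Your positivity argument, however, has a real gap. You claim that $N=X^{-1}$ satisfies a \emph{linear} BSDE of the type in Lemma~\ref{lem-p1} because ``the $X\tilde Q X$ term becomes simply $\tilde Q$''. That one term does, but the terms $-BXB'$, $BZ$, $ZB'$ and the It\^o correction $NZNZN$ do not linearize. A direct computation (set $\tilde Z=-NZN$) gives
\[
dN=\tilde Z\,dW-\bigl[N\tilde A+\tilde A'N+\tilde Q\bigr]\,dt+(NB+\tilde Z)\,N^{-1}(B'N+\tilde Z)\,dt,
\]
which is exactly the nonlinear equation \eqref{bs-02} (with $\tilde R=0$) one started from---so inverting $X$ takes you in a circle. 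Separately, even if Lemma~\ref{lem-p1a} applied to $N$, the conclusion $N\ge \delta' I$ is an \emph{upper} bound $X\le(\delta')^{-1}I$, not a non-degeneracy statement; it does not rule out a zero eigenvalue of $X$. The paper's argument is different and much shorter: once $(X,Z)$ is solved, rewrite $X\tilde Q X=\tfrac12 X(\tilde Q X)+\tfrac12(X\tilde Q)X$ and absorb $\tfrac12\tilde Q X$ into $\hat A$, so that \eqref{x-eq-r1} itself is a linear BSDE with bounded ($X$-dependent) coefficients and $\hat Q=0$. Lemma~\ref{lem-p1} then gives $p'X(t)p=\mathbb E[\xi_T'\eta\,\xi_T\mid\mathcal F_t]$, which is strictly positive for $p\ne 0$ because the linear SDE \eqref{y-li1} forces $\xi_T\ne 0$.
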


\begin{proof}
To show the existence of the BSDE (\ref{x-eq-r1}), we consider the following
iteration%
\begin{eqnarray*}
dX^{(n+1)} &=&Z^{(n+1)}dW+\left[ \tilde{A}X^{(n+1)}+X^{(n+1)}\tilde{A}%
^{\prime }-BX^{(n+1)}B^{\prime }+BZ^{(n+1)}+Z^{(n+1)}B^{\prime }\right] dt \\
&&+X^{(n+1)}\tilde{Q}X^{(n)}dt+X^{(n)}\tilde{Q}X^{(n+1)}dt-X^{(n)}\tilde{Q}%
X^{(n)}dt\text{,} \\
X^{(n+1)}(T) &=&\eta
\end{eqnarray*}%
which is a linear BSDE, whose unique solution defines $(X^{(n+1)},Z^{(n+1)})$%
. Since $\eta >0$, each $X^{(n)}\geq 0$ (Lemma \ref{lem-p1}).

Let $Y^{(n)}=X^{(n)}-X^{(n+1)}$ and $U^{(n)}=Z^{(n)}-Z^{(n+1)}$. Then the
pair $(Y^{(n)},U^{(n)})$ satisfies the following stochastic equation:%
\begin{eqnarray*}
dY^{(n)} &=&U^{(n)}dW+\left[ \tilde{A}Y^{(n)}+Y^{(n)}\tilde{A}^{\prime
}-BY^{(n)}B^{\prime }+BU^{(n)}+U^{(n)}B^{\prime }\right] dt \\
&&+Y^{(n)}\tilde{Q}X^{(n)}dt+X^{(n)}\tilde{Q}Y^{(n)}dt \\
&&+\left[ X^{(n)}\tilde{Q}X^{(n-1)}+X^{(n-1)}\tilde{Q}X^{(n)}-X^{(n)}\tilde{Q%
}X^{(n)}-X^{(n-1)}\tilde{Q}X^{(n-1)}\right] dt \\
Y^{(n)}(T) &=&0\text{.}
\end{eqnarray*}%
Note that, since $\tilde{Q}\geq 0$, the symmetric matrix {\small 
\begin{equation*}
X^{(n)}\tilde{Q}X^{(n-1)}+X^{(n-1)}\tilde{Q}X^{(n)}-X^{(n)}\tilde{Q}%
X^{(n)}-X^{(n-1)}\tilde{Q}X^{(n-1)}\equiv -(X^{(n)}-X^{(n-1)})\tilde{Q}%
(X^{(n)}-X^{(n-1)})\leq 0
\end{equation*}%
} for each $n$, hence $Y^{(n)}\geq 0$ (Lemma \ref{lem-p1}). It follows that $%
X^{(n)}\geq 0$ and $X^{(n)}$ is decreasing in matrix sense, and therefore
has a unique limit denoted by $X$. It is then routine to show that $%
\{Z^{(n)}:n\geq 1\}$ converges as well (in $L^{2}([0,T]\times \Omega
,dt\otimes d\mathbb{P})$) to a limit process $Z$. Then $(X,Z)$ solves (\ref%
{x-eq-r1}).

Since $\tilde{Q}\geq 0$ and $\eta \geq 0$ in matrix sense, $X(t)\geq 0$ for
all $t\in \lbrack 0,T]$. Furthermore, if $\tilde{Q}\geq 0$ and $\eta >0$,
then $X(t)>0$ for any $0\leq t\leq T$. To see this, we apply the
representation (\ref{pos-e1}) to $k=1$, $m=0$, $\hat{A}=\tilde{A}+\frac{1}{2}%
X\tilde{Q}$ and $B=\hat{C}_{1}$. Fix $t\leq T$ and define $\xi _{s}$ by
solving the corresponding SDE\ (\ref{y-li1}). If $p \neq 0$, then, by the
uniqueness of linear BSDE for (\ref{y-li1}) with terminal $\xi _{T}$, we can
conclude that $\xi _{T}\neq 0$. As a result, $\xi _{T}^{\prime }H\xi _{T}>0$%
, and therefore $p ^{\prime }X(t)p>0$ a.s..
\end{proof}


\begin{lemma}
\label{lem-p3}Under the same assumption as in Lemma \ref{lem-p2}, and if in
addition $\eta \geq \delta I$ for some $\delta >0$, then $X^{-1}$ is bounded.
\end{lemma}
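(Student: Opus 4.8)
The plan is to obtain a lower bound $X(t) \geq cI$ for some positive constant $c$, from which $X^{-1}(t) \leq c^{-1}I$ follows immediately. The natural tool is Lemma \ref{lem-p1a}, which already gives exactly this kind of bound for the \emph{linear} BSDE (\ref{lin-p1}), provided we can recast (\ref{x-eq-r1}) as such a linear equation with a terminal value bounded below by $\delta I$ and a nonnegative "$\hat Q$." The key observation is that in Lemma \ref{lem-p2} the solution $X$ is already known to be bounded, say $|X(t,\omega)| \leq M$ for all $(t,\omega)$, and $X \geq 0$. Hence the genuinely quadratic term $X\tilde Q X\,dt$ in (\ref{x-eq-r1}) can be rewritten, using the known bounded process $X$ itself, as a \emph{linear} term: set $\hat A := \tilde A + \tfrac12 X\tilde Q$, which is a bounded adapted process since $X$ and $\tilde Q$ are bounded. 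Then $\tilde A X + X\tilde A' + X\tilde Q X = \hat A X + X\hat A'$, so $X$ solves the linear matrix BSDE
\begin{equation*}
dX = Z\,dW + \left[\hat A X + X\hat A' - BXB' + BZ + ZB'\right]dt, \qquad X(T) = \eta,
\end{equation*}
which is precisely (\ref{lin-p1}) with $k=1$, $m=1$, $\hat C_1 = B$, $\hat E_1 = B$ (so that $-BXB'$ plays the role of the $-2\hat C_1' X \hat C_1$-type correction; one matches $\hat C_1'Y\hat C_1 + \hat E_1'Y\hat E_1 - \hat C_1'Y\hat C_1 \cdot(\text{count})$ appropriately), and $\hat Q = 0$.

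Concretely, I would pick the decomposition so that in the notation of (\ref{lin-p1}) the drift reads $-[Y\hat A + \hat A' Y + \hat C_1' Y \hat C_1 + \hat E_1' Y \hat E_1 + \cdots]$ with the choice that reproduces $-BXB'$; since $-BXB'$ is a single negative-definite-signed term one takes, e.g., $\hat C_1 = iB$ formally is not allowed, so instead one absorbs $-BXB'$ by noting it equals $-\hat C_1'X\hat C_1$ with $\hat C_1 = B$ once we also drop the spurious $+\hat C_1'X\hat C_1$ that (\ref{lin-p1}) would generate — that is, I would instead directly verify (\ref{pos-e1})/(\ref{bound-1}) for this specific equation by repeating the short It\^o computation of Lemma \ref{lem-p1a}: with $\xi$ solving $d\xi = \hat A\xi\,ds + B\xi\,dW$ (no independent $\hat W$ needed since the $-BXB'$ term is exactly the It\^o correction that makes $\xi_s'X(s)\xi_s$ a martingale after subtracting the $\int \xi'\hat Q\xi$ part, and here $\hat Q = 0$), one gets $p'X(t)p = \mathbb{E}[\xi_T'\eta\,\xi_T \mid \mathcal{F}_t]$, and then $d|\xi|^2 = (2\xi'\hat A\xi + |B\xi|^2)\,ds + 2\xi'B\xi\,dW$ gives $\mathbb{E}[|\xi_T|^2 \mid \mathcal{F}_t] \geq |p|^2 e^{-\beta'(T-t)}$ by Gronwall, where $\beta' = \mathrm{ess\,sup}\{-2\inf_{|\xi|=1}\xi'\hat A\xi, 0\}$ is finite because $\hat A = \tilde A + \tfrac12 X\tilde Q$ is bounded.

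Combining, $p'X(t)p \geq \delta\,\mathbb{E}[|\xi_T|^2 \mid \mathcal{F}_t] \geq \delta|p|^2 e^{-\beta'(T-t)} \geq \delta e^{-\beta' T}|p|^2$ for all unit $p$, so $X(t) \geq \delta e^{-\beta' T} I$ uniformly in $t$ and $\omega$, whence $X^{-1}(t) \leq \delta^{-1}e^{\beta' T} I$ is bounded. The main obstacle — really the only subtlety — is the bookkeeping of matching (\ref{x-eq-r1}) to the template (\ref{lin-p1}): the term $-BXB'$ is not of the form $+\hat C'Y\hat C$, so one cannot literally quote Lemma \ref{lem-p1a} off the shelf. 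The clean route is therefore not to force (\ref{x-eq-r1}) into (\ref{lin-p1}) verbatim, but to re-run the two-line argument of Lemma \ref{lem-p1a} directly on (\ref{x-eq-r1}) with $\hat Q \equiv 0$ and $\hat A := \tilde A + \tfrac12 X\tilde Q$ bounded, where the $-BXB'$ drift term is exactly the cross-variation correction $\langle dX, \cdot\rangle$ needed for the Feynman–Kac identity and simultaneously contributes the $+|B\xi|^2$ term in $d|\xi|^2$ that is harmless for a \emph{lower} bound. Everything else is the boundedness of $X$ and $\tilde Q$ established in Lemma \ref{lem-p2}.
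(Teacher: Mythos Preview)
Your strategy is exactly the paper's: use the boundedness of $X$ (from Lemma~\ref{lem-p2}) to absorb the quadratic term into a bounded coefficient $\hat A:=\tilde A+\tfrac12 X\tilde Q$, then invoke Lemma~\ref{lem-p1a} with $\hat Q=0$ and $\hat H=\eta\ge\delta I$ to obtain $X(t)\ge\delta e^{-\beta_0 T}I$, whence $X^{-1}$ is bounded. The paper's proof is literally that: observe $\beta_0$ is finite because $X$ is bounded, then cite Lemma~\ref{lem-p1a}.

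The one concrete misstep is in the bookkeeping you yourself single out. Your proposed auxiliary SDE $d\xi=\hat A\xi\,ds+B\xi\,dW$ does \emph{not} make $\xi_s'X(s)\xi_s$ a local martingale: with those signs the $\hat A$--terms in the drift double instead of cancel, and $-BXB'+B'XB$ does not vanish. In fact (\ref{x-eq-r1}) matches (\ref{lin-p1}) cleanly once you take $k=1$, $m=0$, $\hat Q=0$, and choose $\hat A_{\mathrm{lin}}=-\hat A'=-\bigl(\tilde A+\tfrac12 X\tilde Q\bigr)'$ and $\hat C_1=-B'$; then
\[
-\bigl[X\hat A_{\mathrm{lin}}+\hat A_{\mathrm{lin}}'X+Z\hat C_1+\hat C_1'Z+\hat C_1'X\hat C_1\bigr]
=\hat A X+X\hat A'+BZ+ZB'-BXB',
\]
which is precisely the drift of (\ref{x-eq-r1}) after your substitution. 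So Lemma~\ref{lem-p1a} applies off the shelf---no need to re-run its proof---and the correct auxiliary process is $d\xi=-\hat A'\xi\,ds-B'\xi\,dW$. With that sign fix your Gronwall computation for $|\xi|^2$ goes through verbatim and yields the same bound.
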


\begin{proof}
Since $X$ is bounded, we conclude that 
\begin{equation*}
\beta _{0}=\text{ess}\sup_{\omega \in \Omega ,s\leq T}\{-\inf_{|\xi
|=1}\langle 2\tilde{A}\xi +X\tilde{Q}\xi ,\xi \rangle ,0\}
\end{equation*}%
is finite. It then follows from Lemma \ref{lem-p1a} that $X(t)\geq \delta
e^{-\beta _{0}T}I$ so that $X^{-1}$ is bounded.
\end{proof}

\begin{lemma}
\label{main-lemma} Let $\eta =\left( R(T)+H\right) ^{-1}\geq \delta I$ in
Lemma \ref{lem-p2} and $(X,Z)$ be the corresponding solution. Then $%
P=X^{-1}-R$ solves the SRE (\ref{bs-01})--(\ref{co-01}).
\end{lemma}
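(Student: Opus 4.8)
The plan is to verify directly that the pair $(P,\Lambda)$ with $P=X^{-1}-R$ and $\Lambda=-X^{-1}ZX-G$ is a bounded solution of the SRE, exactly as anticipated in the remarks preceding Lemma \ref{lem-a1}. First I would invoke Lemma \ref{lem-p2} to get a bounded solution $(X,Z)$ of \eqref{x-eq-r1} with terminal value $\eta=(R(T)+H)^{-1}$; since assumption (ii) gives $\eta\geq\delta I$, Lemma \ref{lem-p2} also yields $X(t)>0$ for all $t$, and Lemma \ref{lem-p3} gives that $X^{-1}$ is bounded. Hence $K:=X^{-1}$ is a well-defined, bounded, positive-definite continuous semimartingale, and $P=K-R$ is bounded because both $X^{-1}$ and $R$ are; in particular $K=R+P>0$, so the algebraic constraint \eqref{co-01} holds automatically. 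It remains only to check that $(P,\Lambda)$ satisfies the backward equation \eqref{i-bsde-1}, equivalently the differential form \eqref{bs-01}, with the correct terminal condition $P(T)=\eta^{-1}-R(T)=H$.

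For the dynamics, I would compute $dK=d(X^{-1})$ by It\^o's formula applied to the inverse, using the identity $d(X^{-1})=-X^{-1}(dX)X^{-1}+X^{-1}(dX)X^{-1}(dX)X^{-1}$, i.e. $d(X^{-1})=-K(dX)K+K\langle dX,dX\rangle K$. The martingale part of $X$ is $ZdW$, so $\langle dX,dX\rangle=ZZ'\,dt$ is replaced appropriately by matrix multiplication $ZdW$ against itself, giving $KZKZK\,dt$ after substitution (here one must be careful that $Z$ need not be symmetric, so the quadratic-variation term is $K(Z\,dW)(Z\,dW)K=KZ(\cdot)ZK\,dt$ read with the one-dimensional $W$). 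Substituting the drift of \eqref{x-eq-r1} — namely $\tilde A X+X\tilde A'-BXB'+BZ+ZB'+X\tilde Q X$ — and conjugating by $K$ on both sides, one obtains a BSDE for $K$; setting $\tilde\Lambda:=-KZK$ (so that $\Lambda=\tilde\Lambda-G=-X^{-1}ZX-G$) the computation should collapse exactly to \eqref{bs-02} under the standing hypothesis $\tilde R=0$, that is to \eqref{d-1-c0b}. At this point Lemma \ref{lem-a1} is not literally needed since $K=X^{-1}$ by construction, but it confirms consistency. Finally, reversing the substitution $P=K-R$, $\Lambda=\tilde\Lambda-G$ via \eqref{q-02}--\eqref{a-01} recovers \eqref{bs-01}; the terminal condition follows from $K(T)=R(T)+H$.

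The main obstacle is the bookkeeping in the It\^o computation for $d(X^{-1})$ and the verification that, after substitution of the drift and martingale terms of \eqref{x-eq-r1}, every term matches \eqref{d-1-c0b} — in particular tracking the non-symmetry of $Z$, the cross terms $BZ+ZB'$ versus $(KB+\tilde\Lambda)(XB'-Z)K$, and confirming the coefficient $X\tilde QX$ in the inverse equation produces precisely the $\tilde Q$-term $-[K\tilde A+\tilde A'K+\tilde Q]$ plus the quadratic feedback term in $K$. This is purely algebraic but must be done with care; everything else (boundedness of $P+R$, positivity, square-integrability of $\Lambda=-KZK-G$ which follows from boundedness of $K$ and $Z\in L^2$, and measurability/adaptedness) is immediate from the cited lemmas. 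I would also remark that uniqueness is not part of this lemma — it is quoted from \cite{MR1982738} in the statement of Theorem \ref{main-th} — so the lemma only claims that this particular $(P,\Lambda)$ is \emph{a} solution, which is all that is required to finish the existence half of the theorem.
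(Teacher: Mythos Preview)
Your proposal is correct and is essentially the paper's own argument written out in more detail: the paper likewise sets $K=X^{-1}$, $\tilde\Lambda=-KZK$, invokes (somewhat loosely) Lemma~\ref{lem-a1} --- really the reversibility of the derivation \eqref{x-e1}--\eqref{x-e3} --- to conclude that $(K,\tilde\Lambda)$ solves \eqref{d-1-c0b}, and then undoes the substitutions to get \eqref{bs-01}. One small correction: the formula should read $\Lambda=-X^{-1}ZX^{-1}-G$ rather than $-X^{-1}ZX-G$ (the same slip appears verbatim in the paper's text preceding Lemma~\ref{lem-a1}), consistent with your later and correct $\tilde\Lambda=-KZK$.
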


\begin{proof}
Let $K=X^{-1}$ which have been proved to be a bounded matrix valued
semimartingale. Let $\tilde{\Lambda}=-KZK$. Since $KX=XK=I$, it follows from
Lemma \ref{lem-a1} that $(K,\tilde{\Lambda})$ satisfies (\ref{k-n1}).
Therefore $(P,\Lambda )$, where $P=X^{-1}-R$ and $\Lambda =\tilde{\Lambda}-G$%
, in turn solves (\ref{bs-01}). Moreover, $K=X^{-1}>0$, namely, the
constraint (\ref{co-01}) is satisfied.
\end{proof}

The uniqueness for SRE is well known. The proof of Theorem \ref{main-th} is
complete.

\section{Discussions and Examples}

\label{example}

In this section we discuss about the assumptions of Theorem \ref{main-th}
and give examples for illustration.


First of all, if $R\equiv 0$, then condition (\ref{c-a}) holds
automatically, and (\ref{c-b}) is equivalent to $Q\geq 0$. In addition,
assumption (ii) in Theorem \ref{main-th} boils down to $H>0$. (The condition 
$H^{-1}\geq \delta I$ is implied by $H>0$ and the fact that $H$ is bounded.)
In this case, our result improves Theorem 5.2 in \cite{MR1982738}, since
here we do not need to assume $C=0$ and $H^{-1}$ is bounded.

If $R$ is a constant yet indefinite matrix, and $B=C=0$, then again (\ref%
{c-a}) is satisfied, and (\ref{c-b}) reduces to $Q-RA-A^{\prime }R\geq0$. In
this case we recover Theorem 5.3 of \cite{MR1982738}. However, from Theorem %
\ref{main-th} we immediately realize that the assumption $B=C=0$ is far from
being essential. Indeed, in the case when $R,B$ and $C$ are non-random
matrices, the essential condition is $RB+C^{\prime }R=0$, which can be
satisfied easily by infinitely many non-zero matrices $B$ and $C$ and
indefinite matrices $R$. In this case, the condition $Q-RA-A^{\prime }R\geq0$
should be replaced by $Q+\dot R+C^{\prime }RC+R\left( BC-A\right) +\left(
C^{\prime }B^{\prime }-A^{\prime }\right) R\geq 0$, where $\dot R$ denotes
the derivative of $R$, which is zero if $R$ is a constant matrix.

As a matter of fact, we can ``generate" many generally indefinite, adapted
processes $R$ satisfying condition (\ref{c-a}). To see this, let $S$ be the
solution of the following matrix-valued, sample-wise ODE 
\begin{equation*}
dS=\left[e^{C^{\prime }W}Fe^{BW}-C^{\prime }SB-(C^{\prime 2})S-SB^2\right]dt
\end{equation*}
with any given initial state, where $F$ is any given adapted process so that 
$e^{C^{\prime }W}Fe^{BW}$ is integrable over $t\in[0,T]$ a.s.. Define $%
R=e^{-C^{\prime }W}Se^{-BW}$. Then It\^o's formula yields that $R$ satisfies
(\ref{keycondition}), namely, (\ref{c-a}) holds.

If condition (\ref{c-a}) does not hold (i.e. $\tilde R\neq 0$), then, we
will need to study the general inverse equation, (\ref{x-e3}). This is a
very interesting BSDE, since it is matrix-valued involving a cubic term of $%
X $. In general, its global existence is not guaranteed. For example,
suppose that $\tilde{R}=I$, $\tilde{A}=0$, $B=0$, and that the terminal and $%
Q$ are non-random. Then (\ref{x-e3}) becomes 
\begin{equation}
dX=\left[ X\tilde{Q}X-XXX\right] dt  \notag
\end{equation}%
whose solution may explode in a finite time (and thus the corresponding SRE
can not have a global solution). An interesting and challenging open problem
is to identify the ``weakest" condition on $\tilde R$ so that the cubic BSDE
(\ref{x-e3}) admits a solution.

On the other hand, the condition $\left( R(T)+H\right) ^{-1}\geq \delta I$
for some $\delta >0$ is more technical than essential. One can weaken this
condition by incorporating more involved technicalities in our analysis.
However, the main goal of this note is to introduce and highlight the main
approach, that is to use a system of BSDEs to substitute the original mix of
a BSDE and an algebraic constraint, to solving the indefinite SRE.
Therefore, we have preferred not to let undue technicalities distract the
main idea.

Finally, let us remark that the proof of Lemma \ref{lem-p2}, along with
Lemma \ref{main-lemma}, has indeed suggested a numerical scheme to solve the
indefinite SRE. %
%


\begin{thebibliography}{9}
\bibitem{MR0406663} J. M. Bismut, \emph{Linear quadratic optimal stochastic
control with random coefficients}, SIAM J. Control Optimization \textbf{14}
(1976), no.~3, 419--444.

\bibitem{MR2391164} P. Briand and Y. Hu, \emph{Quadratic {BSDE}s with convex
generators and unbounded terminal conditions}, Probab. Theory Related Fields 
\textbf{141} (2008), no.~3-4, 543--567.

\bibitem{MR1626817} S. Chen, X. Li, and X. Y. Zhou, \emph{Stochastic linear
quadratic regulators with indefinite control weight costs}, SIAM J. Control
Optim. \textbf{36} (1998), no.~5, 1685--1702 (electronic).

\bibitem{MR1982738} Y. Hu and X. Y. Zhou, \emph{Indefinite stochastic {R}%
iccati equations}, SIAM J. Control Optim. \textbf{42} (2003), no.~1,
123--137 (electronic).

\bibitem{MR1944142} M.~Kobylanski, J.~P. Lepeltier, M.~C. Quenez, and
S.~Torres, \emph{Reflected {BSDE} with superlinear quadratic coefficient},
Probab. Math. Statist. \textbf{22} (2002), no.~1, Acta Univ. Wratislav. No.
2409, 51--83.

\bibitem{MR1782267} M. Kobylanski, \emph{Backward stochastic differential
equations and partial differential equations with quadratic growth}, Ann.
Probab. \textbf{28} (2000), no.~2, 558--602.

\bibitem{MR1149069} S. G. Peng, \emph{Stochastic {H}amilton-{J}acobi-{B}%
ellman equations}, SIAM J. Control Optim. \textbf{30} (1992), no.~2,
284--304.
\end{thebibliography}

\newpage

\end{document}